\def\div{\operatorname{div}}
\def\RR{\mathbb{R}}
\def\eps{\varepsilon}
\def\LL{\mathbf{L}}
\def\bsig{\boldsymbol{\sigma}}
\def\Th{\mathcal{T}_h}
\def\Itau{\mathcal{I}_\tau}
\def\la{\langle}
\def\ra{\rangle}
\def\Th{\mathcal{T}_h}
\def\Vh{\mathcal{V}_h}
\def\Xh{\mathcal{X}_h}
\DeclarePairedDelimiter{\norm}{\|}{\|}
\DeclarePairedDelimiter{\snorm}{|}{|}
\def\u{\mathbf{u}}
\def\vv{\mathbf{v}}
\def\Du{\mathrm{D}\u}
\def\dt{\partial_t}
\begin{document}

\TitleLanguage[EN]
\title[Nonisothermal Cahn-Hilliard Navier-Stokes system]{Nonisothermal Cahn-Hilliard Navier-Stokes system}


\author{\firstname{Aaron}  \lastname{Brunk}\inst{1,}%
  \footnote{Corresponding author: e-mail \ElectronicMail{abrunk@uni-mainz.de}}}

\address[\inst{1}]{\CountryCode[DE]Johannes Gutenberg University Mainz}
\author{\firstname{Dennis} \lastname{Schumann}\inst{1}}

\AbstractLanguage[EN]
\begin{abstract}
In this research, we introduce and investigate an approximation method that preserves the structural integrity of the non-isothermal Cahn-Hilliard-Navier-Stokes system. Our approach extends a previously proposed technique \cite{brunk2024structurepreserving}, which utilizes conforming (inf-sup stable) finite elements in space, coupled with implicit time discretization employing convex-concave splitting.
Expanding upon this method, we incorporate the unstable $P_1|P_1$ pair for the Navier-Stokes contributions, integrating Brezzi-Pitkäranta stabilization. Additionally, we improve the enforcement of incompressibility conditions through grad div stabilization. While these techniques are well-established for Navier-Stokes equations, it becomes apparent that for non-isothermal models, they introduce additional coupling terms to the equation governing internal energy. 
To ensure the conservation of total energy and maintain entropy production, these stabilization terms are appropriately integrated into the internal energy equation.
\end{abstract}
\maketitle                   

\section{Introduction}

The non-isothermal Cahn-Hilliard-Navier-Stokes (CHNST) system has garnered increasing attention for investigating various phenomena, ranging from two-phase flows to fluid-phase-coupled interactions. These phenomena hold significant relevance in scientific and industrial domains, including additive manufacturing and inkjet printing \cite{van2017binary, Yang2020, dadvand2021advected}. For instance, in the modeling and simulation of powder bed fusion additive manufacturing (PBF-AM) processes, the non-isothermal CHNST model is utilized to portray coupled processes such as fluid-phase interaction, melt flow dynamics, and heat transfer \cite{Yang2020}.

The following system of partial differential equations describes the CHNST system under consideration:
\begin{align}
 \dt\phi &+ \u\cdot\nabla\phi - \div(\LL_{11}\nabla\mu + \LL_{12}\nabla\theta) = 0, \qquad\qquad\qquad\qquad\qquad \mu = -\gamma\Delta\phi + \partial_\phi \Psi(\phi,\theta), \label{eq:ac1}\\
 \dt e &+ \u\cdot\nabla e - \div(\LL_{12}\nabla\mu- \LL_{22}\nabla \theta ) - (\eta\Du- \bsig):\nabla\u = 0, \label{eq:ac2} \\
 \dt\u &+ (\u\cdot\nabla)\u - \div(\eta\Du - p\mathbf{I} - \bsig) = 0, \qquad\qquad\qquad\qquad\qquad \div(\u)=0. \label{eq:ac3}
\end{align}
The system described above is supplemented with periodic boundary conditions and initial conditions. In this context, $\phi$ represents a conserved phase-field, $\mathbf{u}$ signifies the flow velocity, $\theta$ denotes the inverse temperature, and $e\equiv e(\phi,\theta)$ represents the internal energy.
To complete the system, we incorporate the Helmholtz free energy density and the Korteweg stress, which are expressed as:
\begin{align*}
\tilde \Psi(\phi,\theta) =\Psi(\phi,\theta) + \tfrac{\gamma}{2}\snorm{\nabla\phi}^2, \qquad \bsig:=\tfrac{\gamma}{\theta}\nabla\phi\otimes\nabla\phi.
\end{align*}
From this, one can compute the internal energy and entropy according to \cite{Alt1990,Alt1992}, i.e. $e=\partial_\theta \tilde\Psi, \hat s(e(\theta,\phi),\phi) = s(\theta,\phi)=\theta e-\tilde\Psi$. 

The system described above is already formulated in terms of the inverse temperature $\theta$, which proves more advantageous for finite element discretization. The transformation from the original temperature to the inverse temperature is elucidated by Pawlow and Alt, as referenced in \cite{Alt1992}, and further discussed in \cite{brunk2023variational} for a more intricate non-isothermal Cahn-Hilliard-Allen-Cahn system. The general derivation of non-isothermal phase-field models from thermodynamically sound principles demands careful consideration, and for various modeling approaches, we refer to \cite{Charach1998, Fabrizio2006, Pawlow2016}.
For systems of this nature, approaches like those in \cite{Guo_2015, Sun2020} utilize finite differences in space alongside an Energy Quadratization (EQ) assumption, making the driving functional quadratic and thus conducive to standard time discretization techniques. However, this approach often comes at the cost of entropy relaxation, potentially leading to a departure from the original entropy in certain scenarios.

Stabilisation of unstable finite element pairs for the Navier-Stokes equation as well as other stabilisation methods as Grad-Div and SUPG are well-known in the incompressible Navier-Stokes context, cf. the monograph \cite{John2016}. In the case of lowest order $P_1|P_1$ element a quite easy stabilisation is the Brezzi-Pitkäranta stabilisation \cite{Brezzi1984}. 

In this study, we aim to extend a fully discrete method for the Cahn-Hilliard-Navier-Stokes system proposed in \cite{brunk2024structurepreserving} by incorporating unstable finite element pairs via the Brezzi-Pitkäranta stabilisation as well as the usual Grad-Div stabilisation. The focus of the extension is to preserve the underlying thermodynamic structure even at the discrete level. 

The organization of this work is as follows: Section 2 introduces pertinent notation and formulates a variational approach to System \eqref{eq:ac1}--\eqref{eq:ac3} suitable for finite element approximation. Subsequently, we present the fully discrete method and highlight the principal outcome, namely the preservation of total energy and entropy production. Section 3 demonstrates the theoretical framework through an appropriate convergence test. Finally, in Section 4, we conclude the study and offer insights into potential directions for future research.
 
\section{Notation and Main Result}
 
Before we present the new discretization method and main results, let us briefly introduce our notation and main assumptions.


%
\textbf{Notation:} The system \eqref{eq:ac1} -- \eqref{eq:ac3} is investigated on a finite time interval $(0,T)$ and bounded domain $\Omega$. 
For simplicity we consider a spatially periodic setting, i.e., $\Omega \subset \RR^d$, $d=2,3$ is a cube and identified with the $d$-dimensional torus $\mathcal{T}^d$. Moreover, functions on $\Omega$ are assumed to be periodic. 
We denote by $\langle \cdot, \cdot\rangle$ the scalar product on $L^2(\Omega)$, which is defined by
\begin{align*}
\la u, v \ra = \int_\Omega u \cdot v \quad \forall u,v \in L^2(\Omega) \quad\text{ with norm } \norm{u}_0^2:=\int_\Omega u^2.    
\end{align*}
We introduce the usual skew-symmetric formulation of $\mathbf{c}(\u,\vv,\mathbf{w}):=\la(\u\cdot\nabla)\vv,\mathbf{w} \ra$ via
\begin{equation*}
  \mathbf{c}_{skw}(\u,\vv,\mathbf{w}) = \tfrac{1}{2}\mathbf{c}(\u,\vv,\mathbf{w}) - \tfrac{1}{2}\mathbf{c}(\u,\mathbf{w},\vv)  
\end{equation*}
with the relevant property that $\mathbf{c}_{skw}(\u,\vv,\vv)=0$ even if $\u$ is not divergence-free.

We require the following assumptions for this work.
\begin{itemize}
    \item[] (A1) The interface parameters $\gamma$ is a positive constant.
    \item[] (A2) The viscosity function $\eta\equiv\eta(\phi,\theta)$ is strictly positive.
    \item[] (A3) The diffusion matrix $\LL\equiv \LL(\rho,\nabla\rho,\theta)\in\mathbb{R}^{2d \times 2d}$, i.e.
    $\begin{pmatrix}
        \LL_{11} & -\LL_{12} \\ - \LL_{12} & \LL_{22}
    \end{pmatrix},$ is symmetric \& strictly positive definite.
    \item[] (A4) For the driving potential $\Psi(\cdot,\cdot):\RR\times\RR_+\to \RR$ we assume that for fixed $\phi$ the potential $\Psi(\phi,\cdot):\RR_+\to\RR$ is concave and goes to infinity for $\theta\to 0$. For fixed $\theta\in\RR_+$ the potential $\Psi(\cdot,\theta)$ can be decomposed in a strictly convex and a strictly concave part, denoted by $\Psi_{vex}, \Psi_{cav}.$
\end{itemize}

\noindent\textbf{Variational Formulation:}
In \cite{brunk2024structurepreserving} it is shown that sufficiently smooth solutions obey the following variational formulation.

\begin{lemma}[\cite{brunk2024structurepreserving}]
   A sufficiently regular solution $(\phi,\mu,\theta,\u,p)$ of the system \eqref{eq:ac1}--\eqref{eq:ac3} fulfills the variational formulation 
		\hspace{-3em}\begin{align*}
		\la \dt\phi,\psi \ra &- \la \phi\u,\nabla\psi\ra + \la \LL_{11}\nabla\mu - \LL_{12}\nabla\theta,\nabla\psi \ra = 0, \\
		\la \mu,\xi \ra &- \gamma\la \nabla\phi,\nabla\xi \ra - \la \partial_\phi \Psi,\xi\ra = 0, \\
		\la \dt e,w \ra &+  \la \LL_{12}\nabla\mu - \LL_{22}\nabla\theta, \nabla w\ra  - \la \eta\snorm{\Du}^2, w\ra - \la \bsig\u,\nabla w \ra- \la \tfrac{\phi}{\theta}\nabla\mu- \boldsymbol{\sigma}\tfrac{\nabla\theta}{\theta},\u w\ra - \la s+\phi\mu,\u\cdot\nabla\tfrac{w}{\theta} \ra  = 0, \\ 
		\la \dt\u,\vv \ra &+ \mathbf{c}_{skw}(\u,\u,\vv) + \la \eta\Du,\mathrm{D}\vv \ra - \la \pi,\div(\vv) \ra +\la \tfrac{\phi}{\theta}\nabla\mu +  (s+\phi\mu)\nabla\tfrac{1}{\theta}-\boldsymbol{\sigma}\tfrac{\nabla\theta}{\theta},\vv \ra = 0,  \\
		0 &= \la \div(\u), q \ra
	\end{align*}
		for sufficiently regular test functions $\psi,\xi,\vv,w,q$ and  $\pi := p + e - \frac{s+\phi\mu}{\theta}$.
	\end{lemma}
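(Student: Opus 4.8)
The plan is a direct, if lengthy, computation: test each line of \eqref{eq:ac1}--\eqref{eq:ac3} with the indicated test function, integrate over $\Omega$, integrate by parts (all boundary contributions vanish by periodicity), and reshape the nonlinear terms using $\div\u=0$ together with the constitutive relations $e=\partial_\theta\tilde\Psi$, $s=\theta e-\tilde\Psi$, $\tilde\Psi=\Psi+\tfrac{\gamma}{2}\snorm{\nabla\phi}^2$ and $\mu=-\gamma\Delta\phi+\partial_\phi\Psi$. I would treat the equations roughly in order of increasing difficulty, since the energy balance reuses the momentum computation.

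The $\phi$-line and the $\mu$-line are immediate after testing with $\psi$ and $\xi$: one integrates the diffusive flux (respectively the $\gamma\Delta\phi$ term) by parts and rewrites the transport term as $\u\cdot\nabla\phi=\div(\phi\u)$, valid since $\div\u=0$, so that $\la\u\cdot\nabla\phi,\psi\ra=-\la\phi\u,\nabla\psi\ra$; the constraint line $\la\div\u,q\ra=0$ needs nothing. For the momentum equation, testing with $\vv$ and integrating the stress by parts gives the viscous term $\la\eta\Du,\mathrm{D}\vv\ra$ (symmetry of $\Du$) and, because $\div\u=0$, the convective term $\mathbf{c}_{skw}(\u,\u,\vv)$. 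The substantive step is to recast the pressure gradient together with the Korteweg force $\div\bsig$: starting from the tensor identity $\div(\nabla\phi\otimes\nabla\phi)=\nabla(\tfrac12\snorm{\nabla\phi}^2)+(\Delta\phi)\nabla\phi$, carrying the weight $\tfrac{\gamma}{\theta}$ through (which generates the term $-\bsig\tfrac{\nabla\theta}{\theta}$), inserting $-\gamma\Delta\phi=\mu-\partial_\phi\Psi$, and differentiating $\tilde\Psi(\phi,\nabla\phi,\theta)$ via the chain rule with $e=\partial_\theta\tilde\Psi$ and $s=\theta e-\tilde\Psi$, one verifies the identity
\begin{equation*}
  \nabla p+\div\bsig=\nabla\Big(p+e-\tfrac{s+\phi\mu}{\theta}\Big)+\tfrac{\phi}{\theta}\nabla\mu+(s+\phi\mu)\nabla\tfrac{1}{\theta}-\bsig\tfrac{\nabla\theta}{\theta}.
\end{equation*}
Integrating the gradient term against $\vv$ by parts produces $-\la\pi,\div\vv\ra$ with $\pi=p+e-\tfrac{s+\phi\mu}{\theta}$, and what remains is exactly the additional force appearing in the weak momentum equation.

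For the internal energy equation \eqref{eq:ac2}, tested with $w$, I would integrate the diffusive flux by parts and split the dissipative term using the symmetry of $\bsig$ and $\Du:\nabla\u=\snorm{\Du}^2$, namely $(\eta\Du-\bsig):\nabla\u=\eta\snorm{\Du}^2-\div(\bsig\u)+(\div\bsig)\cdot\u$; this yields the target terms $-\la\eta\snorm{\Du}^2,w\ra$ and $-\la\bsig\u,\nabla w\ra$ together with a leftover $-\la(\div\bsig)\cdot\u,w\ra$. Combining the leftover with the transport term $\la\u\cdot\nabla e,w\ra$ and inserting the same decomposition of $\div\bsig$ as above, the two $\u\cdot\nabla e$ contributions cancel, the $\tfrac{\phi}{\theta}\nabla\mu$ and $\bsig\tfrac{\nabla\theta}{\theta}$ pieces assemble into $-\la\tfrac{\phi}{\theta}\nabla\mu-\bsig\tfrac{\nabla\theta}{\theta},\u w\ra$, and the remaining piece $\la\u\cdot\nabla\big(\tfrac{s+\phi\mu}{\theta}\big)-(s+\phi\mu)\,\u\cdot\nabla\tfrac{1}{\theta}, w\ra=\la\tfrac{w}{\theta},\u\cdot\nabla(s+\phi\mu)\ra$ becomes, after one further integration by parts using $\div(\tfrac{w}{\theta}\u)=\u\cdot\nabla\tfrac{w}{\theta}$, exactly $-\la s+\phi\mu,\u\cdot\nabla\tfrac{w}{\theta}\ra$. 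This establishes the $e$-line and completes the lemma.

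The main obstacle is purely the bookkeeping in the last two steps: keeping track of the $\nabla\tfrac{1}{\theta}$ and $\tfrac{\nabla\theta}{\theta}$ contributions generated by the temperature-dependent Korteweg stress and by working in the inverse temperature, while keeping every sign --- including those of the flux matrix $\LL$ --- consistent with \eqref{eq:ac1}--\eqref{eq:ac2}. The conceptual point, which dictates the chosen grouping of terms, is that one and the same rearrangement of $\div\bsig$ does double duty: in the momentum balance it defines the effective pressure $\pi$, and in the internal-energy balance it supplies precisely the coupling terms whose presence is what later renders the fully discrete scheme total-energy conserving and entropy producing.
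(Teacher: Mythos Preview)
The paper does not actually prove this lemma; it is quoted verbatim from \cite{brunk2024structurepreserving} and stated without argument, so there is no ``paper's own proof'' to compare against. Your derivation is nonetheless the expected one: testing, integrating by parts under periodicity, and using $\div\u=0$ together with the constitutive relations $e=\partial_\theta\tilde\Psi$, $s=\theta e-\tilde\Psi$ and $\mu=-\gamma\Delta\phi+\partial_\phi\Psi$. The central identity you state,
\[
  \nabla p+\div\bsig=\nabla\pi+\tfrac{\phi}{\theta}\nabla\mu+(s+\phi\mu)\nabla\tfrac{1}{\theta}-\bsig\tfrac{\nabla\theta}{\theta},
\]
with $\pi=p+e-\tfrac{s+\phi\mu}{\theta}$, is correct (it follows from $\div(\nabla\phi\otimes\nabla\phi)=\nabla\tfrac12|\nabla\phi|^2+(\Delta\phi)\nabla\phi$, the product rule for the factor $\tfrac{\gamma}{\theta}$, and the chain-rule expansion of $\nabla\tilde\Psi$ combined with $e=\partial_\theta\Psi$), and once it is in hand both the momentum line and the internal-energy line fall out exactly as you describe. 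Your reuse of the same decomposition of $\div\bsig$ in the $e$-equation, followed by the final integration by parts turning $\la\tfrac{w}{\theta},\u\cdot\nabla(s+\phi\mu)\ra$ into $-\la s+\phi\mu,\u\cdot\nabla\tfrac{w}{\theta}\ra$, is the right manoeuvre; there is no gap.
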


This variational formulation allows to deduce the thermodynamics quantities by inserting simple test function.
\begin{theorem}[\cite{brunk2024structurepreserving}]
For a sufficiently regular solution $(\phi,\mu,\theta,\u,\pi)$ of \eqref{eq:ac1} -- \eqref{eq:ac3} conservation of mass and total energy as well as entropy production holds, i.e.
\begin{align*}
 \la \dt\phi,1 \ra &= 0, \qquad \la \dt (\tfrac{1}{2}\snorm{\u}^2+e(\phi,\theta)), 1\ra = 0, \\
 \la \dt s(\phi,\theta),1 \ra &= \norm{\sqrt{\eta\theta}\Du}_0^2 + \la (
    \nabla\mu, \nabla \theta)^\top,\LL (
    \nabla\mu, \nabla \theta)^\top  \ra = : \mathcal{D}_{\LL^*,\theta}(\u,\mu,\theta)\geq0. 
\end{align*}
\end{theorem}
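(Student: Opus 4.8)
The plan is to derive each of the three identities by inserting carefully chosen test functions into the variational formulation of the previous lemma and then summing. For mass conservation, I would simply take $\psi = 1$ in the first equation: the convective term $\la\phi\u,\nabla\psi\ra$ vanishes because $\nabla 1 = 0$, and the diffusion term $\la \LL_{11}\nabla\mu-\LL_{12}\nabla\theta,\nabla 1\ra$ vanishes likewise, leaving $\la\dt\phi,1\ra = 0$. This is the easy warm-up.

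For total energy conservation, the idea is to test the momentum equation with $\vv = \u$, test the internal energy equation with $w = 1$, and add them. Testing momentum with $\u$: the term $\la\dt\u,\u\ra = \tfrac12\ddt\norm{\u}_0^2$, the skew-symmetric trilinear form gives $\mathbf{c}_{skw}(\u,\u,\u)=0$ by its defining property, the viscous term becomes $\la\eta\Du,\Du\ra = \norm{\sqrt{\eta}\Du}_0^2$, the pressure term $\la\pi,\div\u\ra = 0$ by the incompressibility constraint (equation five with $q=\pi$), and there remains the coupling term $\la\tfrac{\phi}{\theta}\nabla\mu + (s+\phi\mu)\nabla\tfrac1\theta - \bsig\tfrac{\nabla\theta}{\theta},\u\ra$. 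Testing internal energy with $w=1$: the diffusion term drops ($\nabla 1 = 0$), the term $\la\eta\snorm{\Du}^2,1\ra = \norm{\sqrt{\eta}\Du}_0^2$ appears with the opposite sign, the Korteweg term $\la\bsig\u,\nabla 1\ra = 0$, and one is left with $\la\dt e,1\ra$ minus the two coupling terms $\la\tfrac\phi\theta\nabla\mu - \bsig\tfrac{\nabla\theta}{\theta},\u\ra$ and $\la s+\phi\mu,\u\cdot\nabla\tfrac1\theta\ra$ (using $w/\theta = 1/\theta$). When the two tested equations are summed, the viscous dissipation terms cancel, and — this is the crux — the coupling terms must cancel exactly: the momentum coupling $\la\tfrac\phi\theta\nabla\mu + (s+\phi\mu)\nabla\tfrac1\theta - \bsig\tfrac{\nabla\theta}{\theta},\u\ra$ should annihilate the internal-energy couplings $-\la\tfrac\phi\theta\nabla\mu - \bsig\tfrac{\nabla\theta}{\theta},\u\ra - \la s+\phi\mu,\u\cdot\nabla\tfrac1\theta\ra$. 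One checks term by term: $\nabla\tfrac1\theta = -\tfrac{\nabla\theta}{\theta^2}$, so $(s+\phi\mu)\nabla\tfrac1\theta$ against $-(s+\phi\mu)\u\cdot\nabla\tfrac1\theta$ cancel; the $\tfrac\phi\theta\nabla\mu$ terms cancel; and the $\bsig\tfrac{\nabla\theta}{\theta}$ terms cancel. What survives is $\tfrac12\ddt\norm{\u}_0^2 + \la\dt e,1\ra = 0$, which is the claimed identity.

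For entropy production, the test functions should be $\psi = \mu$ in the phase-field equation, $\xi = -\dt\phi$ in the chemical-potential equation, $w = \theta$ in the internal energy equation, and one has to recall $s = \theta e - \tilde\Psi$ and $\hat s = \theta e - \Psi - \tfrac\gamma2\snorm{\nabla\phi}^2$. The plan is to compute $\ddt\la\hat s(\phi,\theta),1\ra$ by the chain rule: $\dt\hat s = \partial_e\hat s\,\dt e + \partial_\phi\hat s\,\dt\phi$; since $\hat s$ is the Legendre-type transform one has $\partial_e\hat s = \theta$ and $\partial_\phi\hat s = -\partial_\phi\tilde\Psi\big|_{\theta\text{ fixed in }s\text{-variables}}$, which after using $\mu = -\gamma\Delta\phi + \partial_\phi\Psi$ and integrating the $\gamma\nabla\phi$ term by parts should produce exactly $-\mu$ paired with $\dt\phi$. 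Testing internal energy with $w=\theta$ turns $\la\dt e,\theta\ra$ into (part of) $\ddt\la\hat s,1\ra$ after adding the phase-field contributions, converts the diffusion term into $\la(\nabla\mu,\nabla\theta)^\top,\LL(\nabla\mu,\nabla\theta)^\top\ra$ (since the diffusion coefficients combine with the test gradients $\nabla\theta$ and, via the $\psi=\mu$ equation, $\nabla\mu$, matching the symmetric block form in (A3)), produces the dissipation $\la\eta\snorm{\Du}^2,\theta\ra = \norm{\sqrt{\eta\theta}\Du}_0^2$, and — importantly — the coupling terms and the term $\la s+\phi\mu,\u\cdot\nabla\tfrac\theta\theta\ra = \la s+\phi\mu,\u\cdot\nabla 1\ra = 0$ together with the convective/Korteweg pieces must reorganize so that all velocity-dependent transport terms cancel, leaving precisely $\ddt\la\hat s,1\ra = \norm{\sqrt{\eta\theta}\Du}_0^2 + \mathcal{D}_{\LL^*,\theta}(\u,\mu,\theta)$.

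The main obstacle I anticipate is the bookkeeping in the entropy identity: correctly identifying the partial derivatives $\partial_e\hat s$ and $\partial_\phi\hat s$ from the thermodynamic relations $e = \partial_\theta\tilde\Psi$, $s = \theta e - \tilde\Psi$, keeping straight which variables are held fixed (the $\phi$-derivative of $\hat s$ at fixed $e$ versus at fixed $\theta$), and then verifying that the Korteweg stress $\bsig = \tfrac\gamma\theta\nabla\phi\otimes\nabla\phi$ together with the $s+\phi\mu$ flux terms and the skew-symmetric convection all cancel against one another when $w = \theta$ and $\vv$-type manipulations are combined. The viscous and Onsager-matrix terms are comparatively routine once the test functions are fixed; it is the exact cancellation of the many $\u$-weighted coupling terms — which is exactly the structural feature the paper later wants to preserve at the discrete level — that requires care. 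I would organize the computation so that these cancellations are displayed explicitly, since they are the conceptual heart of the statement.
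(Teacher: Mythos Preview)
The paper does not supply its own proof of this continuous result (it is simply cited from the reference), but your proposal is correct and is precisely the continuous analogue of the argument the paper carries out in the proof of the discrete structure-preservation theorem. The test-function choices you describe --- $\psi=1$ for mass; $\vv=\u$ and $w=1$ for total energy; and $w=\theta$, $\xi=\partial_t\phi$, $\psi=\mu$ (equivalently $-\mu$) for entropy --- together with the term-by-term cancellations of the viscous, Korteweg, and $(s+\phi\mu)$ coupling terms match that discrete proof exactly.
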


\noindent\textbf{Time Discretization:}
We divide the time interval $[0,T]$ uniformly into intervals of size $\tau>0$, defining the time grid $\Itau:={t^0=0,t^1=\tau,\ldots, t^{n_T}=T}$, where $n_T=\tfrac{T}{\tau}$ represents the total number of time steps. The spaces $\Pi^1_c(\Itau;X)$ and $\Pi^0(\Itau;X)$ denote continuous piecewise-linear and piecewise-constant functions on $\Itau$ with values in the space or set $X$. Here, $g^{n+1}$, $g^n$, and $g^{n+1/2}$ refer to the new, old, and midpoint approximations of $g$, respectively, given by $(g^{n+1}+g^n)/2$. We introduce the time difference and discrete time derivative as follows
\begin{equation*}
d^{n+1}g = g^{n+1} - g^n, \quad d^{n+1}_\tau g = \tau^{-1}(g^{n+1}-g^n)=\tau^{-1}d^{n+1}g.
\end{equation*}

\textbf{Space Discretization:} For spatial discretization, we require that $\Th$ be a geometrically conforming partition of $\Omega$ into simplices that can be periodically extended to cover $\Omega$. We denote the space of continuous piecewise linear over $\Th$, as well as the space of mean-free and positive piecewise-linear functions over $\Th$, as follows
\begin{align*}
   \hspace{-3em} \Vh := \{v \in H^1(\Omega)\cap C^0(\bar\Omega) : v|_K \in P_1(K) \quad \forall K \in \Th\}, \qquad \Xh := \Vh^d, \qquad \Vh^+ := \{v \in \Vh : v(x) > 0,\; \forall x\in\Omega\}.
\end{align*}

We denote the convex-concave splitting by the following abbreviation
\begin{equation*}
 \Psi(\phi_h^{n+1},\phi_h^n,\theta_h^{n+1}) = \Psi_{vex}(\phi_h^{n+1},\theta_h^{n+1}) + \Psi_{cav}(\phi_h^{n},\theta_h^{n+1})   
\end{equation*}
and we will use the notation $e(\phi_h^{n+1},\theta_h^{n+1})=:e_h^{n+1}$ and similarly for $s,\Psi$.

 We then propose the fully discrete time-stepping method for the CHNST system.
\begin{problem}\label{prob:ac}
Let $(\phi_{h,0},\u_{h,0},\theta_{h,0})\in \Vh\times\Xh\times \Vh^+$ be given. Find the functions $(\phi_h,\u_h,\theta_h)\in \Pi^1_c(\Itau;\Vh \times\Xh\times \Vh^+)$ and $(\mu_h,\pi_h)\in \Pi^0(\Itau;\Vh\times\Vh)$  such that
\begin{align*}
  \la d_\tau^{n+1}\phi_h,\psi_h \ra &- \la \phi_h^*\u_h^{n+1/2},\nabla\psi_h\ra + \la \LL_{11}^*\nabla\mu_h^{n+1} - \LL_{12}^*\nabla\theta_h^{n+1},\nabla\psi_h \ra = 0, \\
  \la \mu_h^{n+1},\xi_h \ra &- \gamma\la \nabla\phi_h^{n+1},\nabla\xi_h \ra - \la \partial_\phi \Psi(\phi^{n+1}_h,\phi^{n}_h,\theta^{n+1}_h),\xi_h \ra = 0, \\
  \la d_\tau^{n+1}e_h,w_h \ra &  + \la \LL_{12}^*\nabla\mu_h^{n+1} - \LL_{22}^*\nabla\theta_h^{n+1}, \nabla w_h\ra - \la \eta^*\snorm{\Du_h^{n+1/2}}^2, w_h\ra - \eps \la\snorm{\div(\u_h^{n+1/2})}^2, w_h\ra\\
  & - \delta\la h^2\snorm{\nabla\pi_h^{n+1}}^2, w_h\ra- \la \bsig_h^*\cdot\u_h^{n+1/2},\nabla w_h\ra-\la \tfrac{\phi_h^*}{\theta_h^{n+1}}\nabla\mu_h^{n+1} - \bsig_h^*\tfrac{\nabla\theta_h^{n+1}}{\theta_h^{n+1}},\u_h^{n+1/2} w_h \ra \\
  &- \la (s_h^*+\phi_h^*\mu_h^*)\u_h^{n+1/2} ,\tfrac{\theta_h^{n+1}\nabla w_h-w_h\nabla\theta_h^{n+1}}{(\theta_h^*)^2} \ra=0, \\
   \la d_\tau^{n+1}\u_h,\vv_h \ra &+ \mathbf{c}_{skw}(\u_h^{*},\u_h^{n+1/2},\vv_h) + \la \eta^*\Du_h^{n+1/2},\mathrm{D}\vv_h \ra + \eps\la\div(\u_h^{n+1/2}),\div(\vv_h)\ra- \la \pi_h^{n+1},\div(\vv_h)\ra \\
  & +\la \tfrac{\phi_h^*}{\theta_h^{n+1}}\nabla\mu_h^{n+1} - \bsig_h^*\tfrac{\nabla\theta_h^{n+1}}{\theta_h^{n+1}} - (s_h^*+\phi_h^*\mu^*_h)\tfrac{\nabla\theta_h^{n+1}}{(\theta_h^*)^2},\vv_h \ra=0, \\
  \hspace{-8em}\la \div(\u_h^{n+1/2}),q_h \ra &=  -\delta\la h^2\nabla \pi^{n+1}_h,\nabla q_h \ra
\end{align*}
holds for $(\psi_h,\xi_h,w_h,\vv_h,q_h)\in\Vh\times\Vh\times\Vh^+\times \Xh\times\Vh$, with the stabilisation parameters $\eps$ and $\delta$ which may also depend on $h$, and $g^*$ denoting an evaluation of $g$ at any $t\in[t^n,t^{n+1}]$, but all terms have to be evaluated at the same point in time. 
\end{problem}

\begin{theorem}\label{thm:result}
For any solution $(\phi_h,\mu_h,\theta_h,\u_h,\pi_h)$ of Problem \ref{prob:ac} discrete mass and total energy conservation as well as entropy production holds, i.e.
\begin{align*}
&\la \phi^{n+1} -\phi^0,1 \ra = 0 ,\qquad \la \tfrac{1}{2}\snorm{\u^{n+1}_h}^2 + e(\phi_h^{n+1},\theta_h^{n+1}) -  \tfrac{1}{2}\snorm{\u^{0}_h}^2 - e(\phi_h^{0},\theta_h^{0},1 \ra =  0, \\
&\la  s(\phi_h^{n+1},\theta_h^{n+1}) -  s(\phi_h^{0},\theta_h^{0}),1\ra = \tau\sum_{k=0}^{n_T}\mathcal{D}_{\theta^{n+1}_h,\LL^*}(\u^{n+1}_h,\mu_n^{n+1},\theta_h^{n+1}) + \sum_{k=0}^{n_T}\mathcal{D}_{num}^{k+1},
\end{align*}
where the numerical dissipation satisfies $\mathcal{D}_{num}^{k+1}\geq0$ and is given by
\begin{align*}
    \mathcal{D}_{num}^{k+1} &= \tfrac{\gamma}{2}\norm{\nabla d^{k+1}\phi_h}^2 - \partial_{\theta\theta}\Psi(\phi^n_h,\xi^3_h)(d^{k+1}\theta_h)^2 + (\partial_{\phi\phi}\Psi_{vex}(\xi^1_h,\theta^{k+1}_h) - \partial_{\phi\phi}\Psi_{cav}(\xi^2_h,\theta^{k+1}_h))(d^{k+1}\phi_h)^2 \\
    & + \tau\eps\norm*{\sqrt{\theta_h^{k+1}}\div(\u_h^{k+1/2})}_0^2 +  \tau\delta\norm*{h\sqrt{\theta_h^{k+1}}\nabla \pi_h^{k+1}}_0^2
\end{align*}
\end{theorem}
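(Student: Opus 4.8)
The proof is a fully discrete analogue of the formal thermodynamic computation, so the strategy is to test the five equations of Problem~\ref{prob:ac} with carefully chosen discrete test functions and to track the new stabilisation terms.

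\textbf{Plan.} \emph{Mass conservation} is immediate: take $\psi_h\equiv 1$ in the $\phi_h$-equation; the convective and flux terms vanish since $\nabla 1 = 0$, leaving $\la d_\tau^{n+1}\phi_h,1\ra = 0$, which telescopes over $k=0,\dots,n$. \emph{Total energy balance:} test the momentum equation with $\vv_h = \u_h^{n+1/2}$ and use $\mathbf{c}_{skw}(\u_h^*,\u_h^{n+1/2},\u_h^{n+1/2})=0$; the discrete product rule $\la d_\tau^{n+1}\u_h,\u_h^{n+1/2}\ra = \tfrac12 d_\tau^{n+1}\snorm{\u_h}^2$ produces the kinetic part. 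Then test the internal-energy equation with $w_h\equiv 1$: the key point is that with $w_h=1$ one has $\nabla w_h = 0$, so the gradient-type terms drop, and the viscous term $-\la\eta^*\snorm{\Du_h^{n+1/2}}^2,1\ra$, the grad-div term $-\eps\la\snorm{\div\u_h^{n+1/2}}^2,1\ra$, and the Brezzi--Pitk\"aranta term $-\delta\la h^2\snorm{\nabla\pi_h^{n+1}}^2,1\ra$ appear with exactly the sign needed to cancel against the corresponding terms generated in the momentum test (the $\eta^*\Du:\mathrm D\u$ term, the $\eps\la\div\u,\div\u\ra$ term, and, via the pressure term $-\la\pi_h^{n+1},\div\u_h^{n+1/2}\ra$ combined with the modified mass equation tested with $q_h=\pi_h^{n+1}$, the $\delta h^2\snorm{\nabla\pi_h^{n+1}}^2$ term). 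The Korteweg/entropy-flux couplings cancel between the two equations as in \cite{brunk2024structurepreserving}. What remains is $\la d_\tau^{n+1}(\tfrac12\snorm{\u_h}^2) + \la d_\tau^{n+1} e_h,1\ra = 0$, and here one invokes that $d_\tau^{n+1} e_h$ is the exact discrete time difference of $e(\phi_h,\theta_h)$ (this is the reason Problem~\ref{prob:ac} is posed with $d_\tau^{n+1} e_h$ rather than a chain-rule expansion), so the sum telescopes.

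\textbf{Entropy production.} This is the substantive part. The idea is to test the $\phi_h$-equation with $\psi_h = \mu_h^{n+1}$, the $\mu_h$-equation with $\xi_h = -d_\tau^{n+1}\phi_h$, the internal-energy equation with $w_h = \theta_h^{n+1}$, and the momentum equation with $\vv_h = (s_h^* + \phi_h^*\mu_h^*)\u_h^{n+1/2}/(\theta_h^*)^2$ (or whichever velocity multiplier makes the mixed convective terms match — this has to be read off carefully from the precise placement of $\theta_h^*$ versus $\theta_h^{n+1}$ in the scheme). Summing, the dissipative bilinear form $\mathcal D_{\theta_h^{n+1},\LL^*}$ emerges from the $\LL$-blocks tested against $(\nabla\mu_h^{n+1},\nabla\theta_h^{n+1})^\top$, and $\norm{\sqrt{\eta^*\theta_h^{n+1}}\Du_h^{n+1/2}}_0^2$ from the viscous term tested with $\theta_h^{n+1}$. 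The stabilisation terms contribute $\tau\eps\norm{\sqrt{\theta_h^{k+1}}\div\u_h^{k+1/2}}_0^2$ and $\tau\delta\norm{h\sqrt{\theta_h^{k+1}}\nabla\pi_h^{k+1}}_0^2$ — here it is essential that these were added to the internal-energy equation weighted by $w_h$ (so that testing with $w_h=\theta_h^{n+1}$ yields a sign-definite contribution) rather than left only in the momentum/mass equations; this is precisely the structural modification the paper advertises. The left-hand side must be reorganised into $d^{k+1} s_h = s(\phi_h^{k+1},\theta_h^{k+1}) - s(\phi_h^{k},\theta_h^{k})$ plus the numerical-dissipation remainder. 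Using $s = \theta e - \tilde\Psi$ and second-order Taylor expansions of $\Psi$ in $\phi$ (split convex/concave) and in $\theta$ (concave), together with the convex-concave splitting identity $a(a-b) = \tfrac12(a^2 - b^2) + \tfrac12(a-b)^2$ applied to the $\gamma\la\nabla\phi_h^{n+1},\nabla d^{n+1}\phi_h\ra$ term, produces the claimed form of $\mathcal D_{num}^{k+1}$, with mean-value points $\xi_h^1,\xi_h^2,\xi_h^3$. Nonnegativity of $\mathcal D_{num}^{k+1}$ then follows term-by-term: $\tfrac\gamma2\norm{\nabla d^{k+1}\phi_h}^2\ge 0$ trivially; $-\partial_{\theta\theta}\Psi\ge 0$ by concavity of $\Psi(\phi,\cdot)$ in (A4); $\partial_{\phi\phi}\Psi_{vex} - \partial_{\phi\phi}\Psi_{cav} > 0$ by strict convexity/concavity of the split in (A4); and the last two terms are manifestly nonnegative since $\theta_h^{k+1}\in\Vh^+$.

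\textbf{Main obstacle.} The routine parts (telescoping, $w_h=1$, $\psi_h=\mu_h^{n+1}$) are mechanical; the delicate bookkeeping is (i) verifying that \emph{every} convective and Korteweg coupling term — there are several, each carrying a different combination of $\theta_h^{n+1}$, $\theta_h^*$, $\nabla(1/\theta)$-type factors — cancels exactly between the $\phi_h$-, energy-, and momentum-equations under the chosen test functions, given the scheme's specific evaluation points; and (ii) extracting the precise second-order remainder terms, i.e.\ showing that $\la d_\tau^{n+1} e_h, \theta_h^{n+1}\ra$ minus the entropy flux contributions equals $d_\tau^{n+1} s_h$ plus exactly the Taylor-remainder expression in $\mathcal D_{num}^{k+1}$ — this requires applying the integral form of Taylor's theorem to $s$ (equivalently to $\tilde\Psi$) consistently and matching it with the convex--concave splitting error. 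I expect (i) to be the real work, since it is where the nonstandard placement of the stabilisation terms in the internal-energy equation has to be shown to be exactly compensating; once the couplings are verified to cancel, (ii) follows the template of \cite{brunk2024structurepreserving}.
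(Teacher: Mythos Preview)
Your plan for mass and total energy conservation is exactly the paper's: $\psi_h=1$; then $\vv_h=\u_h^{n+1/2}$, $w_h=1$, $q_h=\pi_h^{n+1}$, with the stabilisation corrections in the internal-energy equation providing the needed cancellations. Good.

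The gap is in the entropy step. You propose testing the momentum equation with $\vv_h=(s_h^*+\phi_h^*\mu_h^*)\u_h^{n+1/2}/(\theta_h^*)^2$. Two problems: first, this function is not in $\Xh$ (products and quotients of $P_1$ functions are not $P_1$), so the test is inadmissible; second, and more importantly, the momentum equation is \emph{not used at all} in the entropy argument. The variational formulation is designed so that testing only the $\phi_h$-equation with $\psi_h=-\mu_h^{n+1}$, the $\mu_h$-equation with $\xi_h=d_\tau^{n+1}\phi_h$, and the internal-energy equation with $w_h=\theta_h^{n+1}$ already closes the computation. The convective term you are worried about,
\[
\la (s_h^*+\phi_h^*\mu_h^*)\u_h^{n+1/2},\tfrac{\theta_h^{n+1}\nabla w_h - w_h\nabla\theta_h^{n+1}}{(\theta_h^*)^2}\ra,
\]
vanishes identically for $w_h=\theta_h^{n+1}$ since the numerator is $\theta_h^{n+1}\nabla\theta_h^{n+1}-\theta_h^{n+1}\nabla\theta_h^{n+1}=0$. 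Likewise, the two Korteweg contributions $-\la\bsig_h^*\u_h^{n+1/2},\nabla\theta_h^{n+1}\ra$ and $+\la\bsig_h^*\nabla\theta_h^{n+1},\u_h^{n+1/2}\ra$ (the latter arising from $-\bsig_h^*\tfrac{\nabla\theta_h^{n+1}}{\theta_h^{n+1}}$ against $\u_h^{n+1/2}\theta_h^{n+1}$) cancel by symmetry of $\bsig_h^*$, and the remaining $\la\phi_h^*\nabla\mu_h^{n+1},\u_h^{n+1/2}\ra$ cancels against the convective term from the $\phi_h$-equation. So the ``delicate bookkeeping (i)'' you flag as the main obstacle actually dissolves once you choose $w_h=\theta_h^{n+1}$ and observe these internal cancellations; no velocity test function is required. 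The remainder of your plan (the Taylor-remainder extraction of $\mathcal{D}_{num}^{k+1}$, positivity via (A4), telescoping) matches the paper.
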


\begin{proof}
Conservation of mass follows immediately by taking $\psi_h=1$. 

\textbf{Conservation of total energy:} 
Using the algebraic identity $a(a+b) - (a+b)b=a^2-b^2 $ we obtain after rearrangement
\begin{equation*}
    \frac{1}{\tau}\la \tfrac{1}{2}\snorm{\u^{n+1}_h} + e_h^{n+1} -  \tfrac{1}{2}\snorm{\u^{n}_h}^2 -e_h^n, 1\ra = \la d_\tau^{n+1}\u_h,\u_h^{n+1/2} \ra + \la d_\tau^{n+1}e_h,1 \ra.
\end{equation*}

We insert $\vv_h=\u_h^{n+1/2}, w_h=1$ using the skew-symmetric of $\mathbf{c}_{skew}(\u_h^*,\u_h^{n+1/2},\u_h^{n+1/2})=0$ and $\nabla 1=0$ to obtain
\begin{align*}
\hspace{-4em}\la d_\tau^{n+1}\u_h,\u_h^{n+1/2} \ra + \la d_\tau^{n+1}e_h,1 \ra &= - \la \eta^*\Du_h^{n+1/2},\Du_h^{n+1/2} \ra - \eps\la\div(\u_h^{n+1/2}),\div(\u_h^{n+1/2})\ra+ \la \pi_h^{n+1},\div(\u_h^{n+1/2})\ra \\
  & -\la \tfrac{\phi_h^*}{\theta_h^{n+1}}\nabla\mu_h^{n+1} - \bsig_h^*\tfrac{\nabla\theta_h^{n+1}}{\theta_h^{n+1}} - (s_h^*+\phi_h^*\mu^*_h)\tfrac{\nabla\theta_h^{n+1}}{(\theta_h^*)^2},\u_h^{n+1/2} \ra \\
& +\la \eta^*\Du_h^{n+1/2}, \Du_h^{n+1/2}\ra + \eps\la\div(\u_h^{n+1/2}), \div(\u_h^{n+1/2})\ra\\
  & + \delta\la h^2\nabla\pi_h^{n+1}, \nabla\pi_h^{n+1}\ra+\la \tfrac{\phi_h^*}{\theta_h^{n+1}}\nabla\mu_h^{n+1} - \bsig_h^*\tfrac{\nabla\theta_h^{n+1}}{\theta_h^{n+1}},\u_h^{n+1/2} \ra \\
  &- \la (s_h^*+\phi_h^*\mu_h^*)\u_h^{n+1/2} ,\tfrac{\nabla\theta_h^{n+1}}{(\theta_h^*)^2} \ra\\
  \intertext{Obvious cancellation yields}
  & = \la \pi_h^{n+1},\div(\u_h^{n+1/2})\ra + \delta\la  h^2\nabla\pi_h^{n+1}, \nabla\pi_h^{n+1}\ra
\end{align*}
The results follows then directly by taking $q_h=\pi_h^{n+1}$.

\textbf{Entropy production:} For the entropy production we compute
\begin{align*}
 \la s_h^{n+1}- s_h^n,1\ra =& \la \theta_h^{n+1}e_h^{n+1} - \theta_h^{n}e_h^n - \Psi_h^{n+1} + \Psi_h^n - \tfrac{\gamma}{2}\snorm{\nabla\phi_h^{n+1}}^2 + \tfrac{\gamma}{2}\snorm{\nabla\phi_h^{n}}^2,1\ra\\
 =&  \la d^{n+1}e_h,\theta_h^{n+1}\ra  - \gamma\la\nabla\phi_h^{n+1},\nabla d^{n+1}\phi_h\ra - \la\Psi_h^{n+1}-\Psi_h^n,1 \ra + \la e_h^n,d^{n+1}\theta_h\ra + \tfrac{\gamma}{2}\snorm{\nabla d^{n+1}\phi_h}^2. 
\end{align*}
Adding $\pm\tfrac{1}{\tau}\partial_\phi\Psi(\phi_h^{n+1},\phi_h^n,\theta_h^{n+1})d^{n+1}\phi_h$ and insertion of $\xi_h=d_\tau^{n+1}\phi_h$ yields
\begin{align*}
\la d_\tau^{n+1}s_h,1\ra=& \la d_\tau^{n+1} e_h,\theta_h^{n+1}\ra - \la\mu_h^{n+1},d_\tau^{n+1}\phi_h\ra - \tfrac{1}{\tau}\la \Psi_h^{n+1} - \Psi_h^n - e_h^n d^{n+1}\theta_h,1\ra \\
&+ \tfrac{1}{\tau}\la \partial_\phi\Psi(\phi_h^{n+1},\phi_h^n,\theta_h^{n+1}),d^{n+1}\phi_h\ra + \tfrac{\gamma}{2\tau}\norm{\nabla d^{n+1}\phi_h}_0^2  \\
 = & \la d_\tau^{n+1} e_h, \theta_h^{n+1}\ra - \la \mu_h^{n+1},d_\tau^{n+1}\phi_h \ra + \mathcal{D}_{num,a}^{n+1}.
\end{align*}

Next we consider the first two inner-products and insert $\psi_h=-\mu_h^{n+1}, w_h=\theta_h^{n+1}$ into the discrete formulation 
\begin{align*}
 \hspace{-3em}\la d_\tau^{n+1} e_h, \theta_h^{n+1}\ra - \la \mu_h^{n+1},d_\tau^{n+1}\phi_h \ra &=
 - \la \LL_{12}^*\nabla\mu_h^{n+1} - \LL_{22}^*\nabla\theta_h^{n+1}, \nabla \theta^{n+1}_h\ra + \la \eta^*\snorm{\Du_h^{n+1/2}}^2, \theta^{n+1}_h\ra \\
 &+ \eps\la\snorm{\div(\u_h^{n+1/2})}^2, \theta^{n+1}_h\ra + \delta\la  h^2\snorm{\nabla\pi_h^{n+1}}^2, \theta^{n+1}_h\ra- \la \bsig_h^*\cdot\u_h^{n+1/2},\nabla \theta^{n+1}_h\ra \\
 &+\la \tfrac{\phi_h^*}{\theta_h^{n+1}}\nabla\mu_h^{n+1} - \bsig_h^*\tfrac{\nabla\theta_h^{n+1}}{\theta_h^{n+1}},\u_h^{n+1/2} \theta^{n+1}_h \ra \\
  &+ \la (s_h^*+\phi_h^*\mu_h^*)\u_h^{n+1/2} ,\tfrac{\theta_h^{n+1}\nabla \theta^{n+1}_h-\theta^{n+1}_h\nabla\theta_h^{n+1}}{(\theta_h^*)^2} \ra \\
  &+ \la \phi_h^*\u_h^{n+1/2},\nabla\mu^{n+1}_h\ra - \la \LL_{11}^*\nabla\mu_h^{n+1} - \LL_{12}^*\nabla\theta_h^{n+1},\nabla\mu^{n+1}_h \ra \\
  \intertext{Cancellation yields}
  & = - \la \LL_{12}^*\nabla\mu_h^{n+1} - \LL_{22}^*\nabla\theta_h^{n+1}, \nabla \theta^{n+1}_h\ra + \la \eta^*\snorm{\Du_h^{n+1/2}}^2, \theta^{n+1}_h\ra \\
 &+ \eps\la \snorm{\div(\u_h^{n+1/2})}^2, \theta^{n+1}_h\ra + \delta\la h^2\snorm{\nabla\pi_h^{n+1}}^2, \theta^{n+1}_h\ra \\
 &- \la \LL_{11}^*\nabla\mu_h^{n+1} - \LL_{12}^*\nabla\theta_h^{n+1},\nabla\mu^{n+1}_h \ra \\
 &= \la \LL^*(\nabla\mu_h^{n+1},\nabla\theta_h^{n+1})^\top,(\nabla\mu_h^{n+1},\nabla\theta_h^{n+1})^\top \ra + \la \eta^*\theta_h^{n+1},\snorm{\Du_h^{n+1/2}}^2 \ra \\
 &+ \eps\la\theta_h^{n+1},\snorm{\div(u_h^{n+1/2})}^2 \ra + \delta\la h^2\theta_h^{n+1},\snorm{\nabla\pi_h^{n+1}}^2 \ra \\
 & = \mathcal{D}_{\LL^*,\theta^{n+1}_h}(\u_h^{n+1/2},\mu_h^{n+1},\theta_h^{n+1}) + \mathcal{D}_{num,b}^{n+1}.
\end{align*}

Finally, for the numerical dissipation we add  $\pm \tfrac{1}{\tau}\Psi_h(\phi_h^n,\theta_h^{n+1})$ which yields
\begin{align*}
 \mathcal{D}_{num,a}^{n+1} &= \tfrac{1}{\tau}\int_\Omega -\Psi_h^{n+1} +\Psi_h(\phi_h^n,\theta_h^{n+1}) -  e_h^nd^{n+1} \theta_h + \tfrac{\gamma}{2}\snorm{\nabla d^{n+1}\phi_h}^2\\
& - \Psi_h(\phi_h^n,\theta_h^{n+1}) - \Psi_h^n  + \partial_\phi\Psi(\phi_h^{n+1},\phi_h^n,\theta_h^{n+1})d^{n+1}\phi_h \\
& = \tfrac{1}{\tau}\int_\Omega \tfrac{\gamma}{2}\norm{\nabla d^{n+1}\phi_h^{n+1}}^2 - \partial_{\theta\theta}\Psi(\phi^n_h,\xi^3_h)(d^{n+1}\theta_h)^2 \\
    &+ (\partial_{\phi\phi}\Psi_{vex}(\xi^1_h,\theta^{n+1}_h) - \partial_{\phi\phi}\Psi_{cav}(\xi^2_h,\theta^{n+1}_h))(d^{n+1}\phi_h^{n+1})^2,
\end{align*}
where $\xi^1_h,\xi^2_h$ are convex combinations of $\phi_h^{n+1},\phi_h^n$ and $\xi^3_h$ is a convex combination of $\theta_h^{n+1},\theta_h^n$. Using the structural assumptions on the potential $\Psi$, cf. (A4), we see that $\mathcal{D}_{num,a}^{n+1}\geq 0$ follows directly. The result then follows by setting $\mathcal{D}_{num}^{n+1}=\mathcal{D}_{num,a}^{n+1} + \mathcal{D}_{num,b}^{n+1}$ and summation over $k$.
\end{proof}

If conservation of total energy is relaxed by total energy dissipation, one can replace all $\u_h^{n+1/2}$ by $\u_h^{n+1}$ and the corrections for the stabilisation in the internal energy can be neglected.

\section{Numerical Test}
In all tests we consider the case of $g^*=g^{n}$ and the resulting nonlinear systems are solved with a Newton method with tolerance $10^{-12}$ in two dimensions and $10^{-8}$ in three dimensions. 
\subsection{Convergence test}

For the convergence test we set $\Omega=(0,1)^2$ and $T=0.1$ which is identified with the two-torus $\mathbb{T}^2$. This accounts for the periodic boundary conditions. 
We consider the initial data
\begin{align*}
 \phi_0(x,y) &= 0.4 + 0.2\sin(2\pi x)\sin(2\pi y), \qquad \theta_0(x,y)= 1 + 0.2\sin(2\pi x)\sin(2\pi y)   \\
 \u_0(x,y) &= 10^{-2}(-\sin(\pi x)^2\sin(2\pi y),\sin(2\pi x)\sin(\pi y)^2)
\end{align*}
with the set of functions and parameters
\begin{align*}
\tilde\Psi(\phi,\theta) &= \log(\theta) + (2\theta-1)\phi^2(1-\phi)^2 + \frac{\gamma}{2}\snorm{\nabla\phi}^2,\\
e &=\tfrac{1}{\theta} + 2\phi^2(1-\phi)^2, \qquad  s = 1-\log(\theta) + \phi^2(1-\phi)^2 - \tfrac{\gamma}{2}\snorm{\nabla\phi}^2, \\
\gamma&=10^{-3},\qquad \eta=10^{-3} + \tfrac{1}{40}(\phi+1)^2,\qquad \LL=10^{-2}\cdot \mathbf{I},\qquad \eps=10^1,\qquad\delta=1.
\end{align*}
We consider the error in space at the final time $T=0.1$ with an respective step-size of $\tau = 10^{-3}$ and compare the  numerical solutions $(\phi_{h,\tau},\mu_{\rho,h,\tau},\theta_{h,\tau},\u_{h,\tau},p_{h,\tau})$ with those computed on uniformly refined grids, $(\phi_{h/2,\tau},\mu_{\rho,h/2,\tau},\theta_{h/2,\tau},\u_{h/2,\tau},p_{h/2,\tau})$, since no analytical solution is available.
The error quantities for the fully-discrete scheme are given in the energy-norm, i.e.
\begin{align*}
	\hspace{-1em}e^a_{h,\tau} &= \norm*{\phi_{h,\tau} - \phi_{h/2,\tau}}_{H^1}^2  + \norm*{\u_{h,\tau} - \u_{h/2,\tau}}_{L^2}^2  + \norm*{\theta_{h,\tau} - \theta_{h/2,\tau}}_{L^2}^2\\
    \hspace{-1em}e^b_{h,\tau} &=  \norm*{\mu_{h,\tau} - \mu_{h/2,\tau}}_{H^1}^2 + \norm*{\u_{h,\tau} - \u_{h/2,\tau}}_{H^1}^2 + \norm*{\theta_{h,\tau} - \theta_{h/2,\tau}}_{H^1}^2 
\end{align*}
as well as some separated errors, $e^\mu_{h,\tau}$, $e^{\u}_{h,\tau}$ and $e^{\theta}_{h,\tau}$, which denote the related single quantities from the second error norm. For the step sizes  $h_k=2^{-k}$ for $k=2,\ldots,6$ we get the following results.

\vspace{-1em}
\begin{table}[htbp!]
	\centering
	\small
	\caption{ Errors and experimental orders of convergence for the CHNST system with $g^*=g^n$.} 
	\begin{tabular}{c||c|c|c|c|c|c|c|c|c|c}
		$ k $ & $e^a_{h,\tau}$ & eoc & $e^b_{h,\tau}$ & eoc & $e^\mu_{h,\tau}$ & eoc & $e^\u_{h,\tau}$ & eoc & $e^\theta_{h,\tau}$ & eoc  \\
		\hline
		$ 2 $ & $3.02 \cdot 10^{-1}$ & --- & $3.87 \cdot 10^{-1}$ & --- & $1.97 \cdot 10^{-1}$ & --- & $7.37 \cdot 10^{-4}$  &   ---    & $1.89 \cdot 10^{-1}$  &   ---  \\
		$ 3 $ & $9.76 \cdot 10^{-2}$ & 1.63 & $1.32 \cdot 10^{-1}$ & 1.55 & $6.67 \cdot 10^{-2}$ & 1.56 & $1.67 \cdot 10^{-4}$  &   2.14   & $6.50 \cdot 10^{-2}$  & 1.54 \\
		$ 4 $ & $2.27 \cdot 10^{-2}$ & 2.11 & $3.35 \cdot 10^{-2}$  & 1.98 & $1.18 \cdot 10^{-2}$ & 1.91 & $5.40 \cdot 10^{-5}$  &   1.63   & $1.57 \cdot 10^{-2}$  & 2.05 \\
		$ 5 $ & $5.45 \cdot 10^{-3}$ & 2.06 & $8.26 \cdot 10^{-3}$  & 2.02 & $4.36 \cdot 10^{-3}$ & 2.02 & $1.87 \cdot 10^{-5}$  &   1.53   & $3.88 \cdot 10^{-3}$  & 2.02 \\
        $ 6 $ & $1.34 \cdot 10^{-3}$ & 2.03 & $2.05 \cdot 10^{-3}$  & 2.01 & $1.08 \cdot 10^{-3}$ & 2.02 & $3.26 \cdot 10^{-6}$  &   2.53   & $9.66 \cdot 10^{-4}$  & 2.00 \\
	\end{tabular}
\end{table}

In the two-dimensional test the mass and total energy conservation error was always in the range of at least $10^{-10}$, i.e. below the Newton tolerance, and entropy was increasing over time as predicted by Theorem \ref{thm:result}. Furthermore, choosing $g^*=g^{n+1}$ results almost the same error rates.

\subsection{Three dimensional simulations}

Due to the usage of $P_1|P_1$ elements computations in three space dimension are much more feasible then with the usual Taylor-Hood elements. As an example we consider an problem-adapted Taylor-Green vortex given by
\begin{align*}
 \phi_0(x,y,z) &= 0.4 + 0.2\sin(\pi x)\sin(2\pi y)\sin(2\pi z), \qquad \theta_0(x,y,z)= 1 + 0.6\sin(2\pi x)\sin(\pi y) \sin(2\pi z),  \\
 \u_0(x,y,z) &= (A\cos(2\pi x)\sin(2\pi y)\sin(2\pi z),B\sin(2\pi x)\cos(2\pi y)\sin(2\pi z),C\sin(2\pi x)\sin(2\pi y)\cos(2\pi z))
\end{align*}
with $A=B=2,~C=-4$ as initial data and changed parameters $\eta=10^{-4}$ and $\LL_{22}=10^{-4}$.
\begin{figure}[htbp!]
\centering
\footnotesize
\begin{tabular}{cc}
     \includegraphics[trim={38cm 0cm 37cm 9cm},clip,scale=0.115]{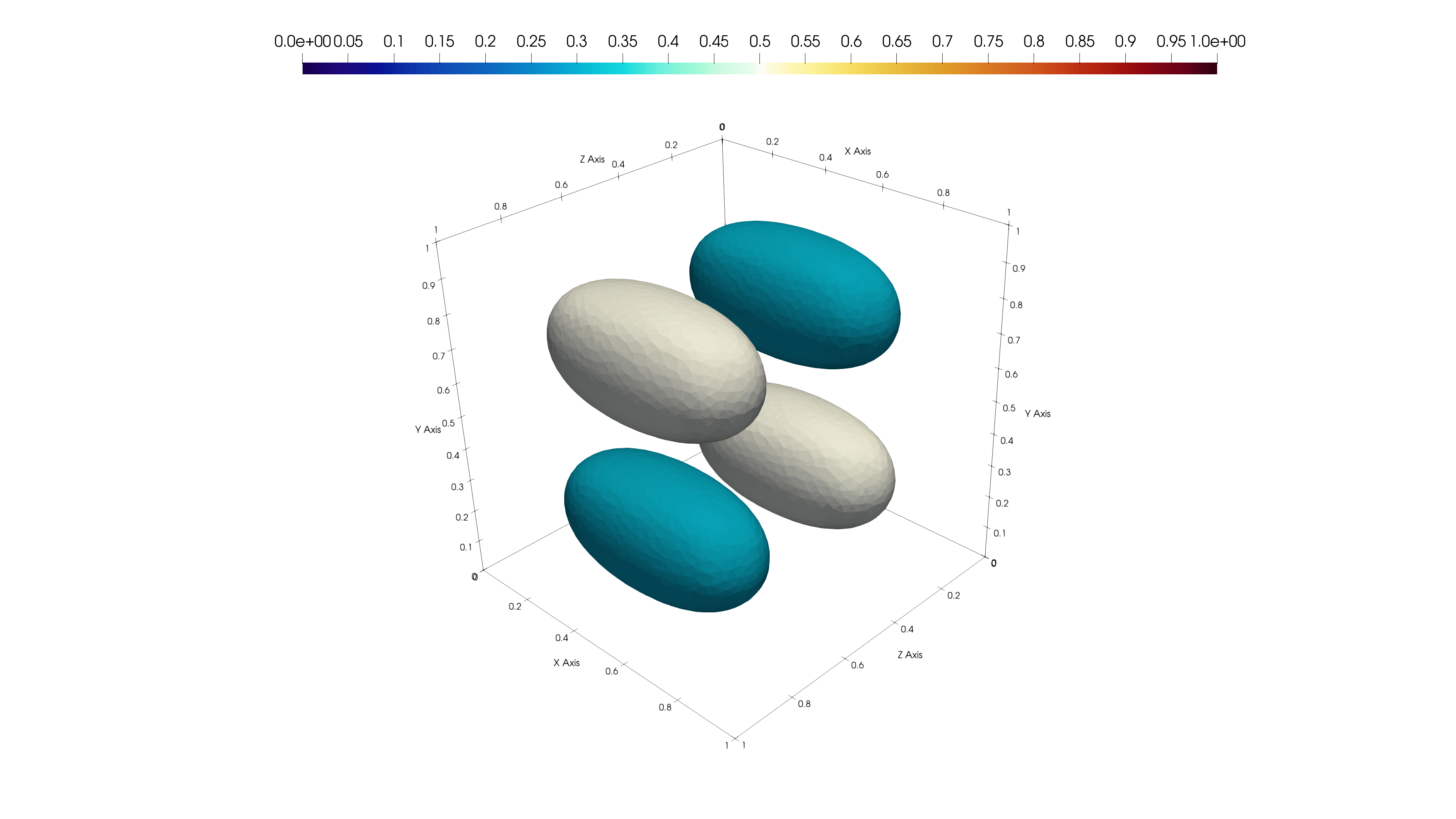} 
    &
    \includegraphics[trim={38cm 0cm 37cm 9cm},clip,scale=0.115]{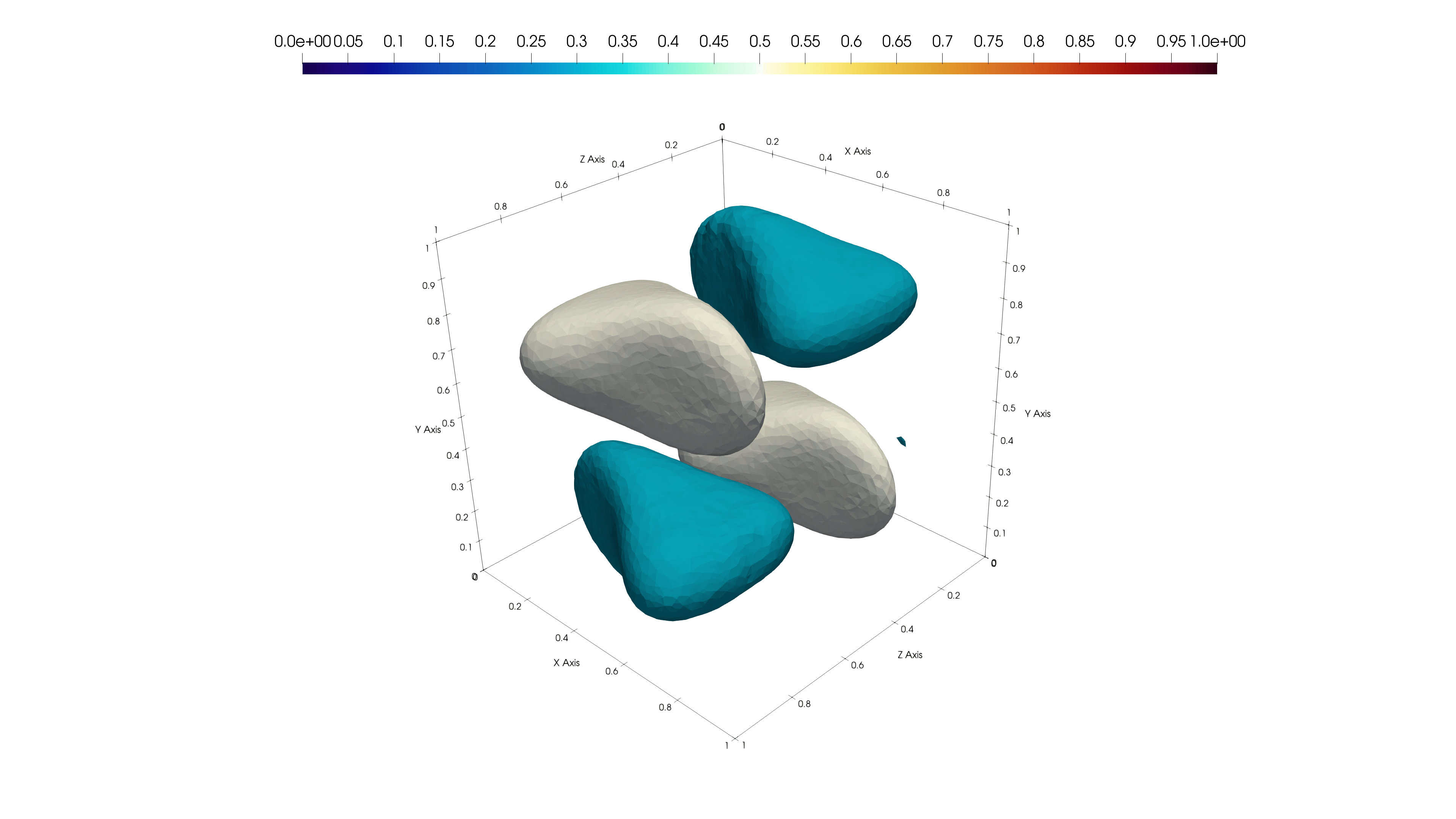} \\[-1em]
    
     \includegraphics[trim={38cm 0cm 37cm 9cm},clip,scale=0.115]{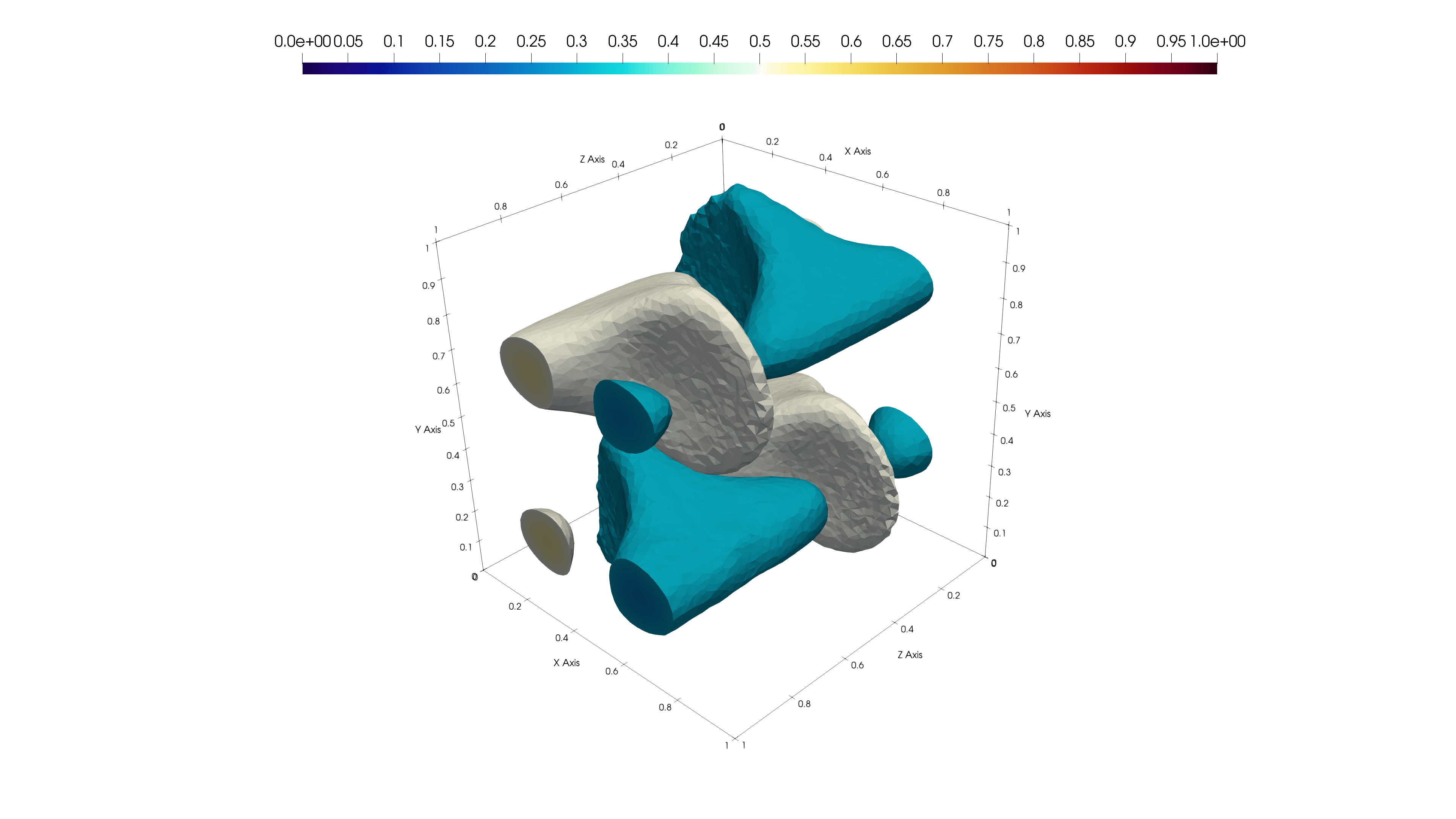} 
    &
    \includegraphics[trim={38cm 0cm 37cm 9cm},clip,scale=0.115]{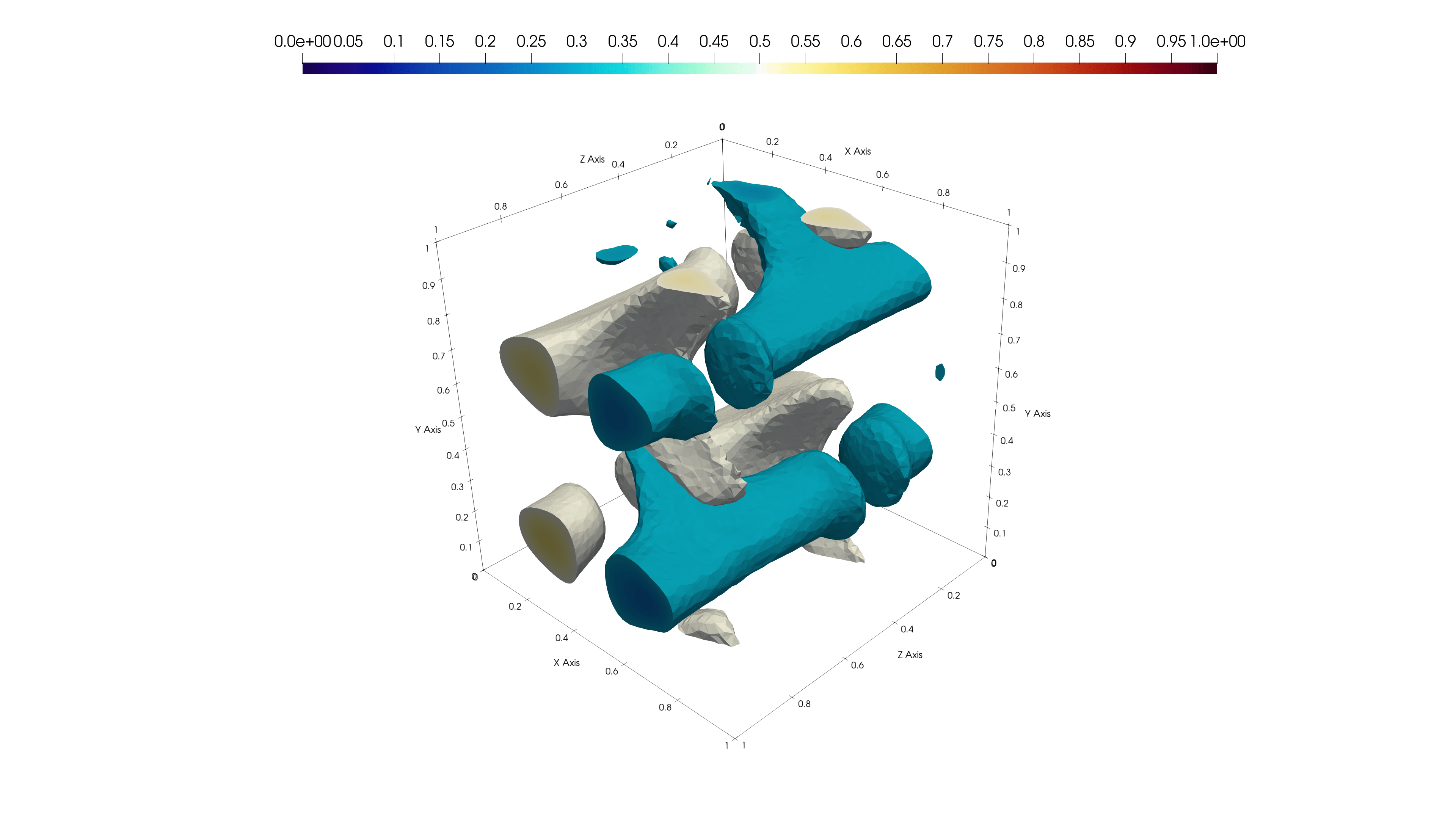} \\
    \multicolumn{2}{c}{\includegraphics[trim={25cm 66cm 16cm 0cm},clip,scale=0.17]{pics/phi_only_new.0000.png}}
\end{tabular}
\caption{Snapshots of the phase variable $\phi$ at the times $t\in\{0,0.03,0.06,0.1\}$.}
\label{pic:phi_only}
\end{figure}

\begin{figure}[htbp!]
\centering
\footnotesize
\begin{tabular}{cc}
     \multicolumn{2}{c}{\includegraphics[trim={14cm 50cm 10cm 1cm},clip,scale=0.335]{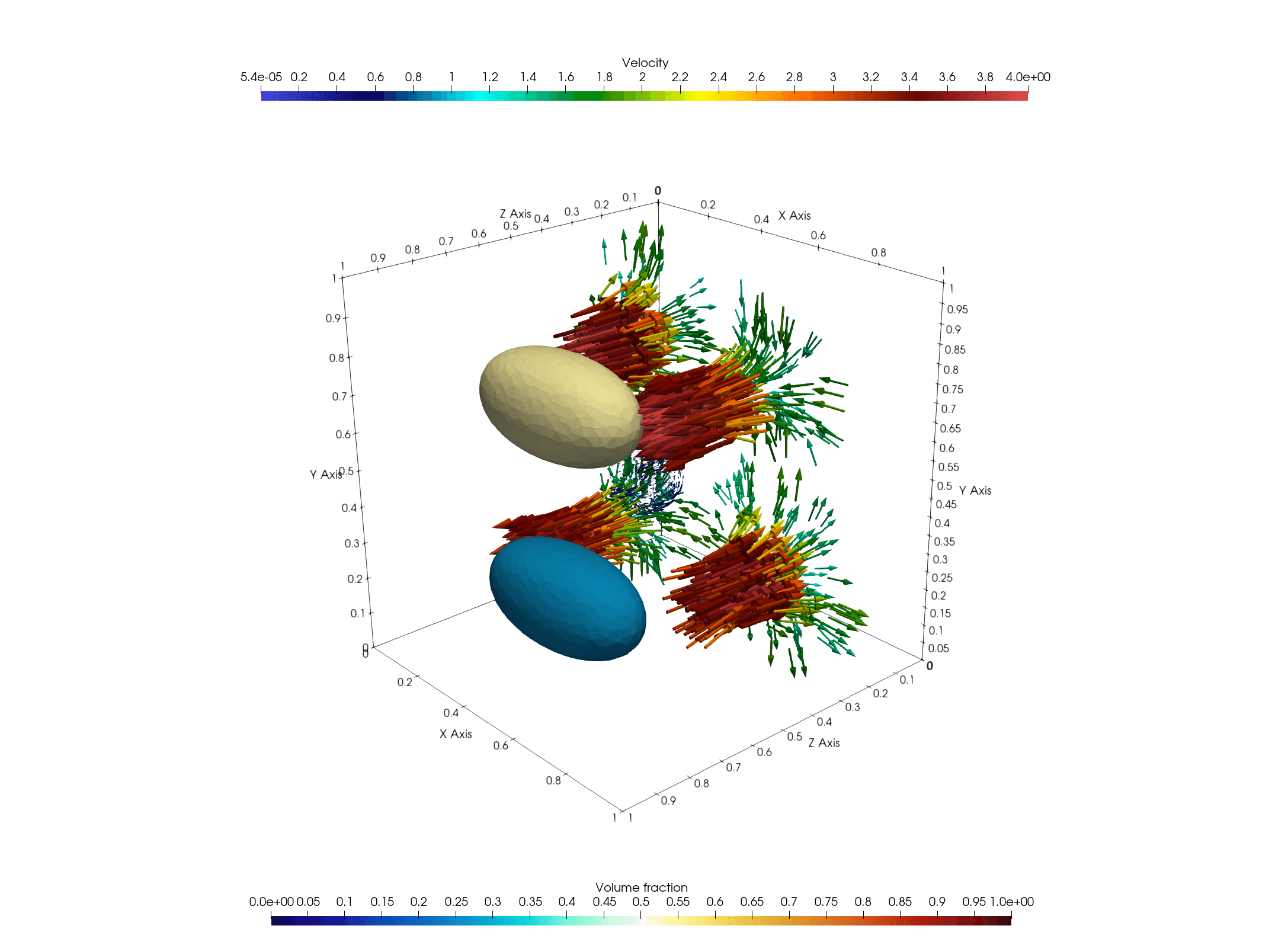}} \\[-0.5em]
     \includegraphics[trim={18cm 5cm 17.5cm 9cm},clip,scale=0.19]{pics/phi_andvelo.0000.png} 
    &
    \includegraphics[trim={18cm 5cm 17.5cm 9cm},clip,scale=0.19]{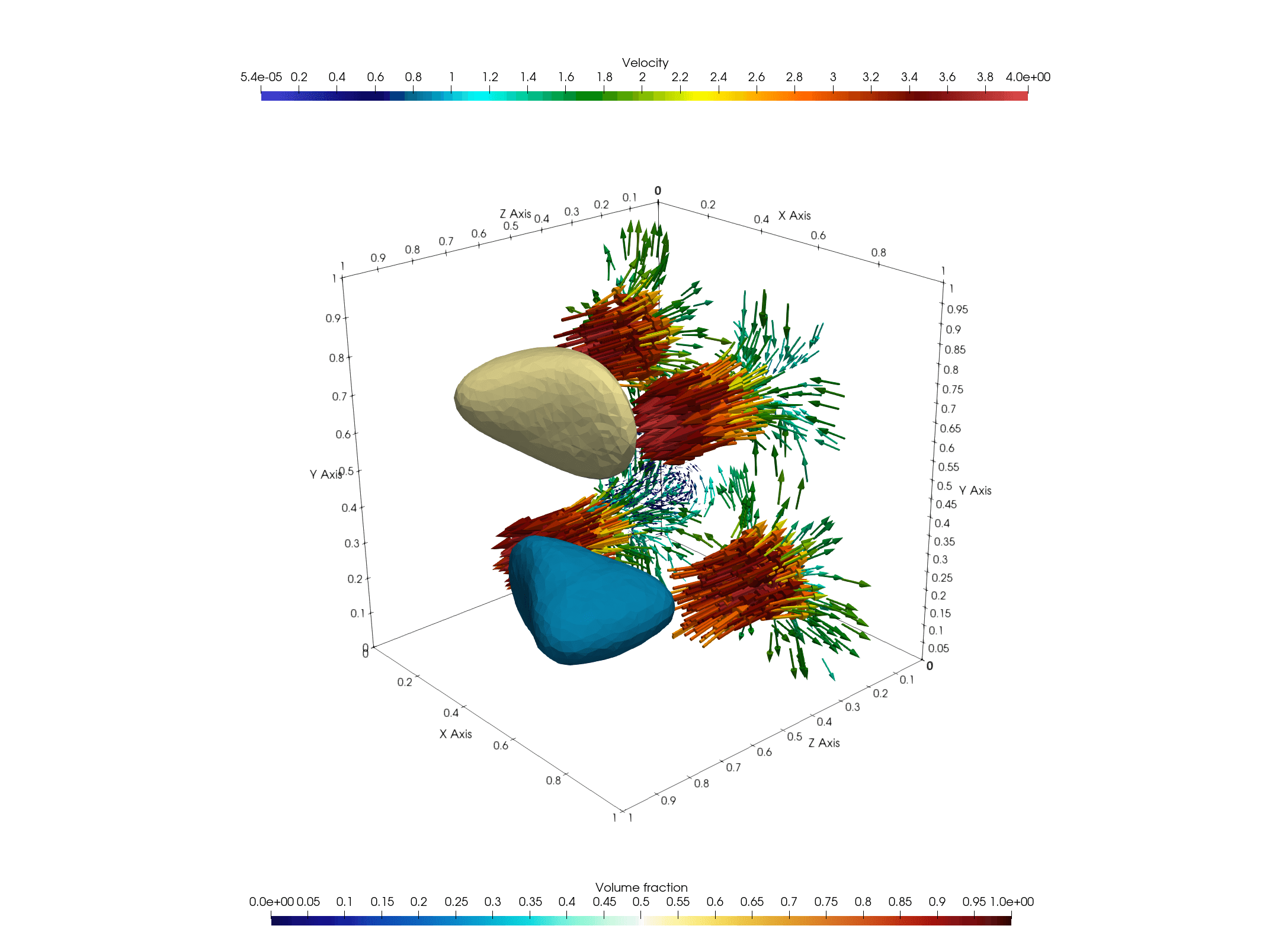} \\[-1em]
    
     \includegraphics[trim={18cm 5cm 17.5cm 9cm},clip,scale=0.19]{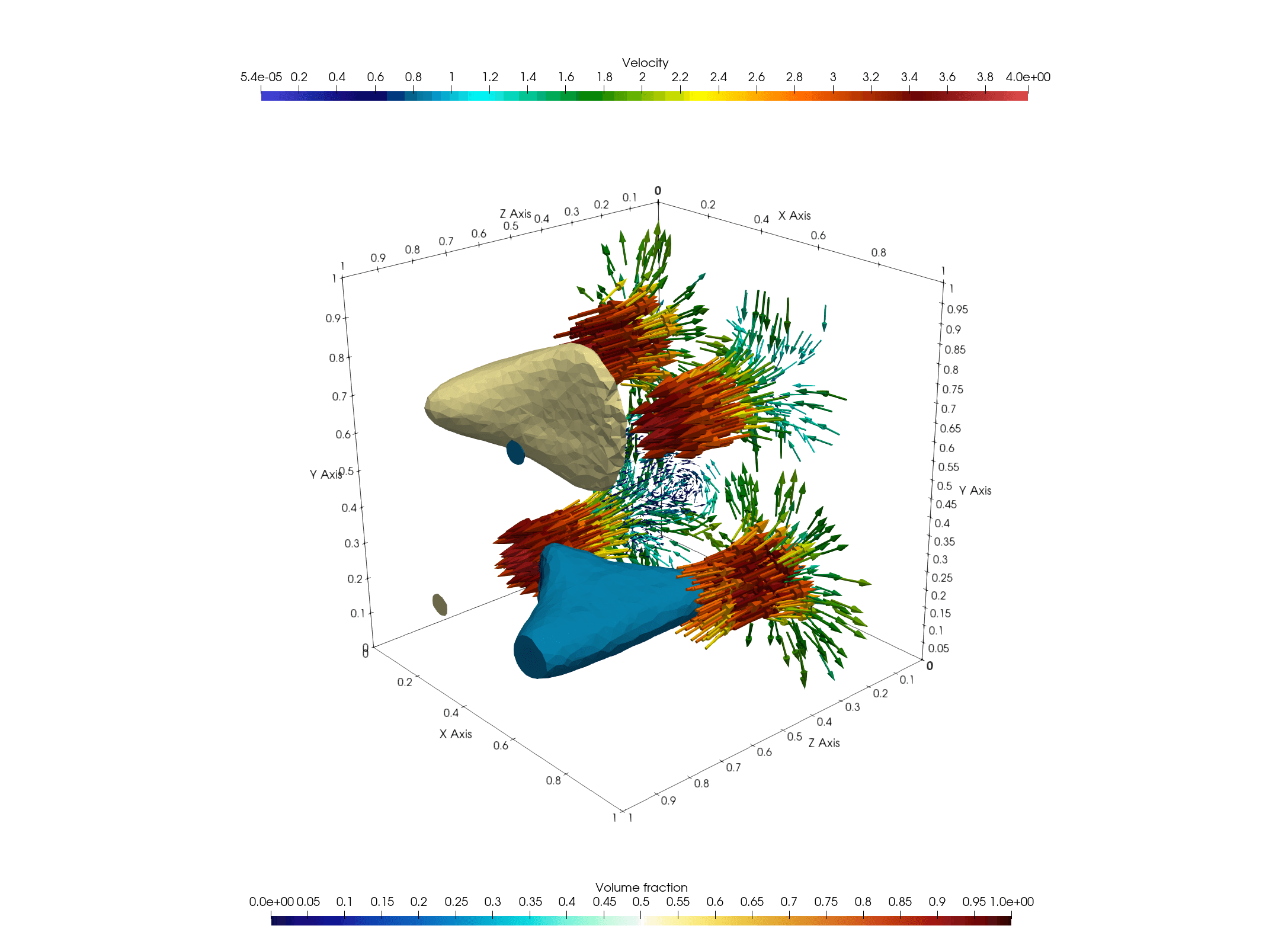} 
    &
    \includegraphics[trim={18cm 5cm 17.5cm 9cm},clip,scale=0.19]{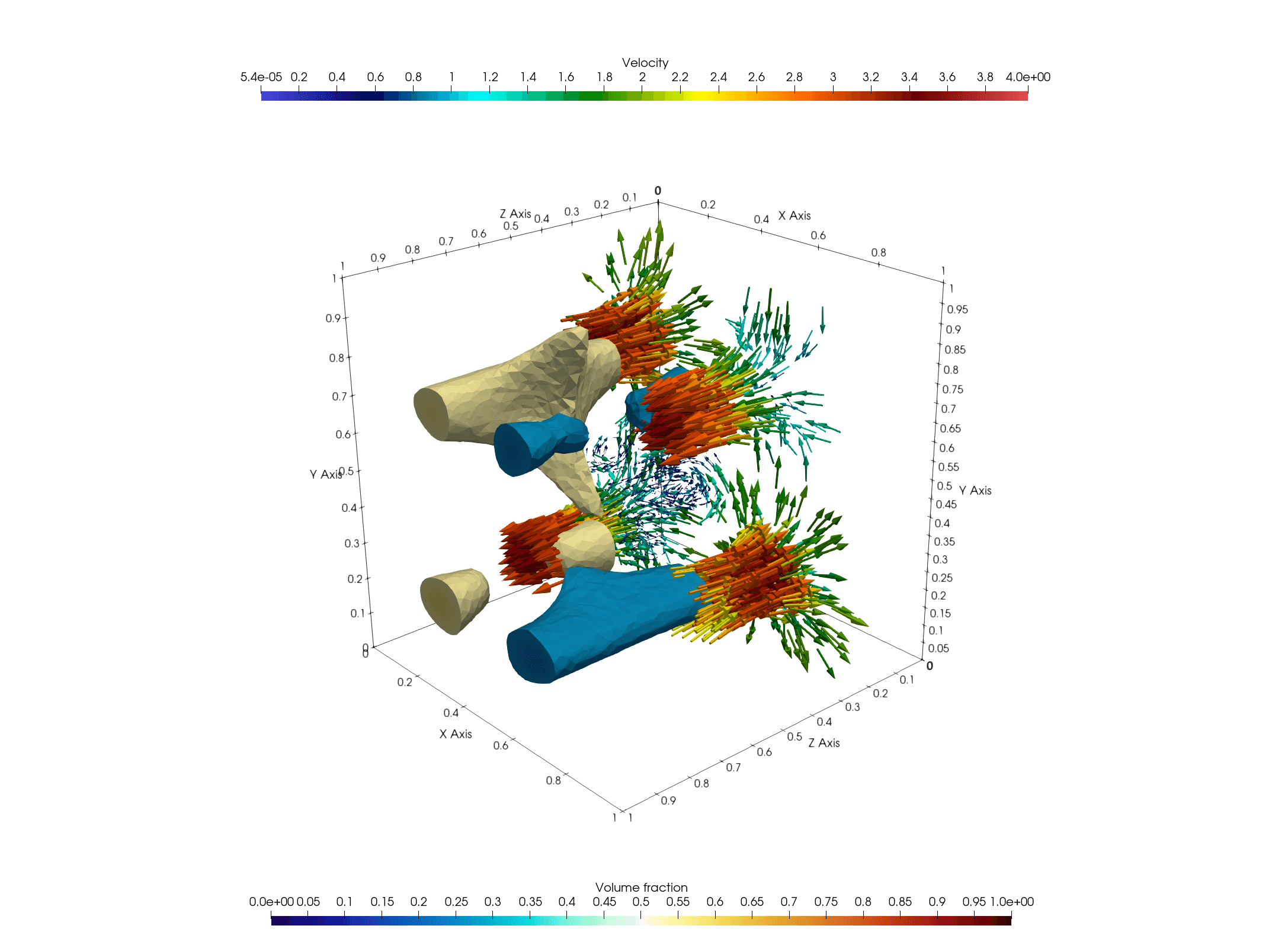} \\[-2em]
    \multicolumn{2}{c}{\includegraphics[trim={14cm 1cm 10cm 50cm},clip,scale=0.335]{pics/phi_andvelo.0000.png}} \\[-0.5em]
\end{tabular}
\caption{Snapshots of the phase variable $\phi$ and the velocity field $\u$ at the times $t\in\{0,0.03,0.06,0.1\}$.  }
\label{pic:phi_andvelo}
\end{figure}
The outcomes of simulations conducted with a maximum mesh size of $h=2.5\cdot10^{-2}$ and time step $\tau=10^{-3}$ are illustrated through various temporal snapshots in Figures \ref{pic:phi_only} to \ref{pic:phi_andvelo}. Throughout the evolution, the phase-field exhibits minimal further separation due to the pronounced mixing effects induced by the velocity field. Notably, a transition occurs where rod-like structures emerge gradually, presumably initiating conventional phase separation after a decay in the velocity field. The gradual decay of the velocity field, owing to its low viscosity, contributes to this process. In the context of three-dimensional simulations, mass conservation is maintained within an order of $10^{-14}$, while the error in total energy conservation is approximately $10^{-8}$, akin to the Newton tolerance level. As anticipated by Theorem \ref{thm:result}, entropy exhibits a continual increase over time.

\section{Conclusion \& Outlook}

This study introduces a fully discrete finite element approach for solving the non-isothermal Cahn-Hilliard-Navier-Stokes system described by equations \eqref{eq:ac1} through \eqref{eq:ac3}. The devised scheme is characterized by its structure-preserving nature, ensuring the conservation of mass, total energy, and entropy production. Notably, it facilitates the utilization of unstable finite element pairs for the incompressible Navier-Stokes equation through Brezzi-Pitkäranta stabilization, supplemented by the conventional Grad-Div stabilization. Experimental validation demonstrates the scheme's efficacy, enabling more efficient simulations compared to the original methodology outlined in \cite{brunk2024structurepreserving}.

Future endeavors will explore discretization strategies based on the original temperature instead of the inverse temperature. Furthermore, additional stabilization such as the Streamline-Upwind/Petrov-Galerkin (SUPG) method proposed by John et al. \cite{John2016} will be investigated to further enhance the scheme's performance.

\begin{acknowledgement}
Support by the German Science Foundation (DFG) via SPP~2256: 
 \textit{Variational Methods for Predicting Complex Phenomena in Engineering Structures and Materials} (project BR~7093/1-2) and via TRR~146: \textit{Multiscale Simulation Methods for Soft Matter Systems} (project C3) is gratefully acknowledged.
\end{acknowledgement}

\bibliographystyle{unsrt}
\bibliography{lit.bib}

\end{document}